\newtheorem{theorem}{Theorem}[section]
\newtheorem{lemma}{Lemma}[section]
\def\QED{\mbox{\rule[0pt]{1.5ex}{1.5ex}}}
\def\endproof{\hspace*{\fill}~\QED\par\endtrivlist\unskip}
\newcommand{\re}{\mathbb{R}}
\newcommand{\norm}[1]{\left\|#1\right\|}
\newtheorem{assumption}[theorem]{Assumption}
\newtheorem{remark}[theorem]{Remark}
\newtheorem{corollary}[theorem]{Corollary}
\newcommand{\OMIT}[1]{}
\newif\ifpdf
\title{\LARGE \bf Heterogeneous Distributed Average Tracking}
\author{Salar Rahili, Wei Ren
\thanks{Salar Rahili and Wei Ren are with the Department of Electrical Engineering, University of California, Riverside, CA, USA.
       {Email: srahi001@ucr.edu, ren@ee.ucr.edu}}
}
\begin{document}
%

\maketitle

\begin{abstract}		
 This paper addresses distributed average tracking for a group of heterogeneous physical agents consisting of single-integrator, double-integrator and Euler-Lagrange dynamics. Here, the goal is that each agent uses local information and local interaction to calculate the average of individual time-varying reference inputs, one per agent.
Two nonsmooth algorithms are proposed to achieve the distributed average tracking goal. In our first proposed algorithm, each agent tracks the average of the reference inputs, where each agent is required to have access to only its own position and the relative positions between itself and its neighbors. To relax the restrictive assumption on admissible reference inputs, we propose the second algorithm.
A filter is introduced for each agent to generate an estimation of the average of the reference inputs. Then, each agent tracks its own generated signal to achieve the average tracking goal in a distributed manner. Finally, numerical example is included for illustration.

	\end{abstract}

	\section{Introduction}
	
In many applications of multi-agent
systems, agents are required to compute the summation of individual time-varying inputs in a distributed manner. For example, in sensor fusion \cite{olfati2004consensus}, feature-based map merging \cite{aragues2012distributed}, distributed Kalman filtering \cite{bai2011}, and distributed optimization \cite{SalarTac}, computing the average of individual reference inputs is an inseparable part of the algorithms and hence this problem attracted a significant
attention recently.

In this paper,  an average tracking problem for a team of  heterogeneous agents is studied, where each agent uses local information to calculate the average of individual time-varying reference inputs, one per agent.
	Here, the average of individual reference inputs is time-varying and it is not available to any agent; hence  distributed average tracking introduces additional complexities and theoretical challenges compared to the consensus and leader-followers problems.

Researchers have introduced linear distributed algorithms as one of the earlier approaches addressing this problem
\cite{spanos2005dynamic,freeman2006,bai2010,kiaauthority}. 
	In \cite{freeman2006}, a proportional-integral algorithm is proposed to achieve distributed average tracking for slowly-varying
	reference inputs with a bounded tracking
	error.
	In \cite{bai2010}, through the use of the internal model principle, an algorithm is introduced  for a special group of time-varying reference inputs with a common denominator in their Laplace transforms. In \cite{kiaauthority}, a distributed average tracking problem is solved, with steady-state errors, while the privacy of each agent's input is preserved.

	However, in linear algorithms, the reference inputs are required to satisfy restrictive constraints and most of the results only can guarantee to have a bounded error.
Therefore, some results based on nonlinear tracking algorithms have been published recently \cite{Nosrati2012,chen2012distributed}. A class of nonlinear algorithms is introduced in \cite{Nosrati2012}, where it is proved that for reference inputs with bounded deviations the tracking error is bounded. In \cite{chen2012distributed}, a nonsmooth algorithm is proposed for reference inputs with bounded derivatives.
	
	However, all the aforementioned studies addressed the distributed average tracking problem from an estimation perspective, where the agents do not have a certain physical dynamics.
	There are various applications, where the distributed average tracking problem is employed as a control law for physical agents  \cite{cheah2009region}.
	For example, multiple agents moving in a formation with local information and interaction might
need to cooperatively figure out what optimal trajectory the virtual leader or center of the team should follow, where each individual agent specifies its motion using that knowledge. Distributed average tracking can be employed in this problem, where each agent can construct its own reference input using the gradient of its own local cost function \cite{SalarTac}.
	A distributed average tracking algorithm is proposed in \cite{feidoubleintegrator}, for physical agents with double-integrator dynamics, where the reference inputs are allowed to have a bounded accelerations.
A distributed algorithm without using velocity measurements for a group
of physical second-order agents is introduced in \cite{ghapani2015distributed}, where the reference input are assumed to have bounded accelerations' deviations. However, this algorithm is not robust to position and velocity initialization errors. Therefore, it is modified in \cite{sheida-novel} to remove the initialization constraint and communication between agents.

However, in real applications physical agents might have more complicated dynamics rather than single-integrator or double-integrator dynamics. There are only a few studies, that have addressed more complicated dynamics. For example, in  \cite{DBLP:journals/corr/ZhaoDL13}, the problem is studied for physical agents with general linear dynamics, where reference inputs are bounded.
A class of algorithms is proposed in \cite{fei2015EulerDAC}, to achieve distributed average tracking for physical Euler-Lagrange systems, where it is proved that a bounded error is achieved for reference inputs with bounded derivatives.  
In \cite{SalarACC16},  a distributed average tracking algorithm is proposed for physical second-order agents, where there is
a nonlinear term in both agents' and reference inputs' dynamics.

In most of the studies in the literature, agents are assumed to be identical. There are only few works assumed nonidentical parameters or nonidentical additive terms in agents' dynamics \cite{fei2015EulerDAC,SalarACC16}.  However, in real applications, we might need to employ different agents (robots) with different abilities to accomplish a task. In these scenarios, agents obey completely different physical dynamics. To the best of our knowledge, the heterogeneous average tracking problem in the literature has been limited to the case that the reference inputs are time-invariant, where the problem is transformed into a  distributed consensus \cite{JieInertia,Single-Double,S-D-E}.
In heterogeneous distributed consensus algorithms, there always exists a term forcing the velocity of each individual agent to zero. This tremendously reduces the complexity of the problem. However, in dynamic average tracking problem, our goal is to track a time-varying trajectory, where a precise control on velocities and accelerations of the agents are required.
It is worthwhile to mention that having a heterogeneous multi-agent system consisting of agents with different dynamics, it is not possible to employ the algorithms proposed for homogeneous dynamics, corresponding to each agent's dynamic, and expect to have a well-behaved system. Therefore, a careful analysis considering the interaction among the agents with different dynamics is needed.

In this paper, a heterogeneous framework consisting of agents with three different dynamics, single-integrator, double-integrator and Euler-Lagrange dynamics, is considered. Two nonsmooth algorithms are proposed to achieve the distributed average tracking goal.
In our first proposed algorithm, each agent is required to have access to only its own position and the relative positions between itself and its neighbors. In some applications, the relative positions can be obtained by using only agents' local sensing capabilities, which might in turn eliminate the communication necessity between agents.
To relax some restrictive assumptions on admissible reference inputs, we propose an estimator-based algorithm, where a filter is introduced for each agent to generate an estimation of the average of the reference inputs. Then, each agent tracks its own generated signal to accomplish the average tracking task. 
In both algorithms, agents described by Euler-Lagrange dynamics, place a restrictive assumption on the admissible reference inputs. The advantage of the second algorithm will be more substantial for a mutli-agent system consisting of agents with only single-integrator and double-integrator dynamics.
In such a framework, using estimator-based algorithm, the heterogeneous dynamic average tracking goal is achieved, where there is no restriction on reference inputs. As a trade-off, the estimator based algorithm necessitates communication between neighbors, where each agent must communicate its own filter's variables with its neighbors.

\section{notations and preliminaries} \label{sec:notation}
Throughout the paper, $\mathbb{R}$ denotes the set of all real numbers.
	The transpose of matrix $A$ and vector $x$ are shown as $A^T$ and $x^T$, respectively. 
	Let $\mathbf{1}_n$ and $\mathbf{0}_n$ denote the $n \times 1$ column vector of all ones and all zeros respectively.
	Let $\mbox{diag}(a_1,\ldots,a_p)$ be the diagonal matrix with diagonal entries $a_1$ to $a_p$.
	We use 
	$\otimes$ to denote the Kronecker product, and $\mbox{sgn}(\cdot)$ to denote the $\mbox{signum}$ function defined componentwise. 
	For a vector function ${x(t):\re\mapsto\re^m}$, define $\norm{x}_\mathfrak{p}$ as the $\mathfrak{p}$-norm. The cardinality of a set $S$ is
denoted by $|S|$.
	
An \textit{undirected} graph $G \triangleq (V,E)$ is used to characterize the interaction topology among the agents, where ${V \triangleq \{1,\ldots,n\}}$ is the node set and $E \subseteq V \times V$ is the edge set.
	An edge $(j,i) \in E$ means that node $i$ can obtain information from node $j$ and vice versa.
	Self edges $(i,i)$ are not considered here.
		The \textit{adjacency matrix} ${\mathbf{A} \triangleq [a_{ij}] \in \mathbb{R}^{n \times n}}$ of the graph $G$ is defined such that the edge weight ${a_{ij}=1}$ if ${(j,i) \in E}$ and ${a_{ij}=0}$ otherwise. For an undirected graph, ${a_{ij}=a_{ji}}$.
	The \textit{Laplacian matrix} ${L \triangleq [l_{ij}] \in \mathbb{R}^{n \times n}}$ associated with $\mathbf{A}$ is defined as ${l_{ii}=\sum_{j \ne i} a_{ij}}$ and ${l_{ij}=-a_{ij}}$, where ${i \ne j}$.
	For an undirected graph, $L$ is symmetric positive semi-definite.
	By arbitrarily assigning an orientation for the edges in $G$, let $D \triangleq [d_{ij}] \in  \mathbb{R}^{n \times |E|}$ be the \textit{incidence matrix} associated with $G$, where $d_{ij} = -1$ if the edge $e_j$ leaves node $i$, $d_{ij} = 1$ if it enters node $i$, and $d_{ij} = 0$ otherwise.
	The \textit{Laplacian matrix} $L$ is then given by $L=DD^T$ \cite{GodsilRoyle01}.
	
	\begin{lemma} \cite{GodsilRoyle01} \label{eigen}
		For a connected graph $G$, the \textit{Laplacian matrix} $L$ has a simple zero eigenvalue such that $0=\lambda_1(L)<\lambda_2(L) \leq \ldots \leq \lambda_n(L)$, where $\lambda_i(\cdot)$ denotes the $i$th eigenvalue. Furthermore, for any vector $y \in \mathbb{R}^n$ satisfying ${\mathbf{1}_n^T y=0}$, we have $\lambda_2(L) y^Ty \leq y^T L y \leq \lambda_n(L) y^Ty$.
	\end{lemma}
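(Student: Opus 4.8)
The plan is to treat the two assertions separately, leaning on the factorization $L = DD^T$ established just above. First I would record that $L$ is symmetric positive semidefinite: for any $y \in \mathbb{R}^n$ we have $y^T L y = y^T D D^T y = \norm{D^T y}^2 \geq 0$, so all eigenvalues are real and nonnegative and may be ordered as $0 \leq \lambda_1(L) \leq \ldots \leq \lambda_n(L)$. Since the row sums of $L$ vanish by construction ($l_{ii} = \sum_{j \neq i} a_{ij}$ and $l_{ij} = -a_{ij}$), we have $L \mathbf{1}_n = \mathbf{0}_n$, so $\mathbf{1}_n$ is an eigenvector with eigenvalue $0$ and hence $\lambda_1(L) = 0$.

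Next I would show that this zero eigenvalue is simple, which is the step where connectivity of $G$ enters. Suppose $L y = \mathbf{0}_n$. Then $0 = y^T L y = \norm{D^T y}^2$, forcing $D^T y = \mathbf{0}$. By the definition of the incidence matrix, each component of $D^T y$ equals $y_i - y_j$ up to sign for the endpoints of the corresponding edge, so $D^T y = \mathbf{0}$ means $y_i = y_j$ for every edge $(j,i) \in E$. Because $G$ is connected, every pair of nodes is joined by a path, and propagating this equality along paths yields $y_1 = \ldots = y_n$; thus $y$ is a scalar multiple of $\mathbf{1}_n$. Therefore the kernel of $L$ is one-dimensional, the zero eigenvalue is simple, and $\lambda_2(L) > 0$, giving the strict chain $0 = \lambda_1(L) < \lambda_2(L) \leq \ldots \leq \lambda_n(L)$.

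Finally, for the Rayleigh-quotient bounds I would invoke the spectral theorem: since $L$ is symmetric, choose an orthonormal eigenbasis $v_1, \ldots, v_n$ with $L v_i = \lambda_i(L) v_i$ and $v_1 = \mathbf{1}_n / \sqrt{n}$. Any $y$ with $\mathbf{1}_n^T y = 0$ is orthogonal to $v_1$, so it expands as $y = \sum_{i=2}^n c_i v_i$, whence $y^T y = \sum_{i=2}^n c_i^2$ and $y^T L y = \sum_{i=2}^n \lambda_i(L) c_i^2$. Bounding each $\lambda_i(L)$ for $i \geq 2$ between $\lambda_2(L)$ and $\lambda_n(L)$ immediately yields $\lambda_2(L)\, y^T y \leq y^T L y \leq \lambda_n(L)\, y^T y$. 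The only genuinely nontrivial step is the simplicity of the zero eigenvalue, where connectivity must be used to rule out additional independent null vectors; the remaining parts are standard consequences of symmetry and the positive-semidefinite factorization $L = DD^T$.
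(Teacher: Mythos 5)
Your proof is correct and complete: the factorization $L = DD^T$ gives positive semidefiniteness, connectivity forces the kernel to be $\mathrm{span}\{\mathbf{1}_n\}$, and the Rayleigh-quotient bounds follow from the spectral theorem applied on the orthogonal complement of $\mathbf{1}_n$. The paper itself offers no proof of this lemma --- it is quoted directly from the cited reference --- and your argument is precisely the standard one found there, so there is nothing to reconcile.
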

	
\begin{corollary}\cite{laSalleNonsmooth} \label{barbalate-nonsmooth}
 Consider the system,
 \begin{align}\label{sys1}
 \dot{x}=f(x,t),
\end{align}where $x(t) \in \mathcal{D} \subset\mathbb{R}^n$ and $f: \mathcal{D}\times [0, \infty] \to \mathbb{R}^n$ and $\mathcal{D}$ is an open and connected set containing $x=0$, and suppose is Lebesgue measurable and is essentially locally bounded, uniformly in $t$. Let $V: \mathcal{D}\times [0, \infty] \to \mathbb{R}$
be locally Lipschitz and regular such that
\begin{align}
W_1(x) \leq& V(x,t) \leq W_2(x) \notag\\
&\dot{\tilde{V}} \leq -W(x),
\end{align}
$\forall t \geq 0, \forall x \in \mathcal{D}$, where $W_1$ and $W_2$ are continuous positive definite
functions, and $W$ is a continuous positive semi-definite function
on $ x \in \mathcal{D}$ and $\dot{\tilde{V}}$ is the generalized gradient of function $V$. 
Choose $r>0$ and $c>0$ such that $\mathcal{B}_r \subset \mathcal{D}$ and $c < \underset{\norm{x}=r}\min  W_1(x)$. Then, all Filippov solutions of \eqref{sys1} such that $x(t_0) \in \{x \in \mathcal{B}_r | W_2(x) \leq c\}$ are bounded and satisfy $W(x) \to 0$ as $t \to \infty$.
	\end{corollary}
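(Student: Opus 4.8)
The plan is to adapt the classical LaSalle--Yoshizawa argument to the nonsmooth, Filippov-solution setting, replacing the ordinary time derivative of $V$ by its set-valued generalized derivative and invoking the nonsmooth chain rule. First I would note that, since $f$ is Lebesgue measurable and essentially locally bounded uniformly in $t$, Filippov solutions of \eqref{sys1} exist and are absolutely continuous on their maximal interval of definition. For such a solution $x(t)$ and a locally Lipschitz, regular $V$, the composition $t \mapsto V(x(t),t)$ is absolutely continuous, and for almost every $t$ its derivative exists and satisfies $\frac{d}{dt}V(x(t),t) \in \dot{\tilde{V}}$. Combined with the hypothesis $\dot{\tilde{V}} \leq -W(x)$ together with $W \geq 0$, this gives $\frac{d}{dt}V(x(t),t) \leq -W(x(t)) \leq 0$ almost everywhere, so $t \mapsto V(x(t),t)$ is nonincreasing.

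Next I would establish boundedness via a sublevel-set argument. For $t \geq t_0$ the monotonicity yields $W_1(x(t)) \leq V(x(t),t) \leq V(x(t_0),t_0) \leq W_2(x(t_0)) \leq c$. Because $c < \min_{\norm{x}=r} W_1(x)$, the trajectory can never reach the sphere $\norm{x}=r$; by continuity of $x(\cdot)$ and connectedness of its image, a solution starting in $\{x \in \mathcal{B}_r : W_2(x) \leq c\}$ must remain inside $\mathcal{B}_r$ for all $t \geq t_0$. Hence the solution is bounded, stays within $\mathcal{D}$, and is defined for all $t \geq t_0$.

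Finally I would prove $W(x(t)) \to 0$ through a Barbalat-type argument. Since $V(x(t),t)$ is nonincreasing and bounded below by $W_1 \geq 0$, it converges to some limit $V_\infty \geq 0$; integrating the almost-everywhere inequality gives $\int_{t_0}^\infty W(x(\tau))\,d\tau \leq V(x(t_0),t_0) - V_\infty < \infty$. Because $x(t)$ remains in a compact subset of $\mathcal{D}$ and $f$ is essentially locally bounded, $\dot{x}$ is essentially bounded, so $x(\cdot)$ is uniformly continuous; since $W$ is continuous on the compact closure of the trajectory, $W(x(\cdot))$ is uniformly continuous in $t$, and the finiteness of the integral then forces $W(x(t)) \to 0$ by Barbalat's lemma.

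The step I expect to be the main obstacle is the rigorous justification of the nonsmooth chain rule, namely that the almost-everywhere derivative of $V$ along a Filippov solution genuinely belongs to the set-valued generalized derivative $\dot{\tilde{V}}$. This is precisely where the regularity of $V$ is indispensable, since it guarantees that one-sided directional derivatives agree and that the Clarke generalized gradient is well behaved, and it is also where the explicit time dependence of $V$ must be incorporated into $\dot{\tilde{V}}$ through the appropriate $\partial V/\partial t$ contribution rather than treated as an ordinary partial derivative.
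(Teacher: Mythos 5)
The paper does not prove this corollary at all: it is imported verbatim by citation (the nonsmooth LaSalle--Yoshizawa theorem from \cite{laSalleNonsmooth}) and used as a black box in the proof of Theorem \ref{Theorem-NoFilter}, so there is no in-paper argument to compare against. Your reconstruction is the standard proof of that cited result and is sound: the nonsmooth chain rule for regular, locally Lipschitz $V$ along Filippov solutions gives $\frac{d}{dt}V(x(t),t)\in\dot{\tilde V}$ almost everywhere, hence $V$ is nonincreasing along trajectories; the sublevel-set comparison $W_1(x(t))\le V(x(t),t)\le W_2(x(t_0))\le c<\min_{\norm{x}=r}W_1(x)$ traps the solution in $\mathcal{B}_r$ (which also yields forward completeness); and integrability of $W(x(\cdot))$ plus uniform continuity (from essential boundedness of $\dot x$ on the compact invariant set) gives $W(x(t))\to 0$ via Barbalat. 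You also correctly identify the one genuinely delicate point --- that the a.e.\ derivative of $t\mapsto V(x(t),t)$ lies in the set-valued Lie derivative, which is exactly where regularity of $V$ is needed and which is the content of the Shevitz--Paden/Bacciotti--Ceragioli lemma that the cited reference supplies. No gap.
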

\section{Problem Statement}\label{Sec:prob-def}
Consider a heterogeneous multi-agent system consisting of $N$ physical agents, where $\mathcal{I}$ denotes the index set $\{1, \cdots, N\}$. The agents' are described by single-integrator, double-integrator and Euler-Lagrange dynamics. Without loss of generality, we label single-integrator agents as $1,\ldots, M-1$, where their dynamics is described by
\begin{align} \label{single-dynamics}
		\dot{x}_i= u_i,  \qquad  i=1,\ldots, M-1.
	\end{align}
	We also label double-integrator agents as $M,\ldots, N'-1,$ with dynamics described by
\begin{align} \label{double-dynamics}
		\dot{x}_i=& v_i, \quad	\dot{v}_i=u_i,  \qquad  i=M,\ldots, N'-1.
	\end{align}
Agents with Euler-Lagrange dynamics are labeled as $N',\ldots,  N,$ and their dynamic is described by 
\begin{align} \label{euler-dynamics}
		M_i(x_i) \ddot{x}_i+C_i(x_i, \dot{x}_i)\dot{x}_i+g_i(x_i)=u_i  \qquad  i=N',\ldots, N,
	\end{align}   
where $x_i(t) \in \mathbb{R}^\mathfrak{p}$, $v_i(t) \in \mathbb{R}^\mathfrak{p}$ and $u_i(t) \in \mathbb{R}^\mathfrak{p}$ are, respectively, $i$th agent's position, velocity and control input.  $M_i (x_i )$ is the $\mathfrak{p} \times \mathfrak{p}$ symmetric inertia matrix, $C_i (x_i, \dot{x}_i)\dot{x}_i$ is the Coriolis and centrifugal force, and $g_i(x_i)$ is the vector of gravitational force. 
The dynamics of the Lagrange systems satisfy the following properties \cite{Euler-Book}:
\begin{itemize}

\item[(P1)] There exist positive constants $k_{\underline{M}}, k_{\overline{M}}, k_{\overline{C}} , k_{\overline{g}}$ such that $k_{\underline{M}} I_p \le M_i(x_i) \le k_{\overline{M}} I_\mathfrak{p}, ||C_i (x_i,\dot{x}_i )\dot{x}_i || \le k_{\overline{C}} ||\dot{x}_i ||$ and $||g_i (x_i)|| \le k_{\overline{g}}$ .
\item[(P2)] $\dot{M}_i (x_i )-2C_i (x_i,\dot{x}_i )$ is skew symmetric.
\item[(P3)] The Lagrange dynamics can be rewritten as, i.e., $M_i(x_i) \chi+C(x_i, \dot{x}_i)\psi+g_i(x_i)=Y_i (x_i,\dot{x}_i,\chi,\psi) \theta_i$, $\forall \chi,\psi \in \mathbb{R}^\mathfrak{p}$, where $Y_i \in \mathbb{R}^{\mathfrak{p} \times \mathfrak{p}_ \theta}$ is the regression matrix and $\theta_i \in \mathbb{R}^{\mathfrak{p}_\theta}$ is the unknown but constant parameter vector.
\end{itemize}

In our framework the agents' interaction topology is described by an undirected graph $G$. 
	\begin{assumption} \label{conn-graph}
		Graph $G$ is connected.
	\end{assumption}
	
	Suppose that each agent has a time-varying reference input $r_i(t) \in \mathbb{R}^\mathfrak{p}$, $i \in \mathcal{I}$, satisfying
	\begin{align} \label{ref-dynamic}
	\dot{r}_i(t)=& v_i^r(t),   \notag \\
	\dot{v}_i^r(t)=& a_i^r(t),
	\end{align}
	where $v_i^r(t) \in \mathbb{R}^\mathfrak{p}$ and $a_i^r(t) \in \mathbb{R}^\mathfrak{p}$ are, respectively, the reference velocity and the reference acceleration for agent $i$ at time $t$.	

\begin{assumption}\label{bound-r-vr}
The reference input $r_i(t), \forall i \in \mathcal{I}$ and its velocity $v^r_i(t)$ are bounded. It is assumed that $\norm{r_i(t)}<\bar{r},$ and $\norm{v^r_i(t)}<\bar{v^r}, \forall i \in \mathcal{I}$, where $\bar{r}$ and $\bar{v^r}$ are positive constants.
\end{assumption}

Here the goal is to design $u_i(t)$ for agent $i \in \mathcal{I}$, to track the average of the reference inputs, i.e.,
	\begin{align}\label{goal}
	\lim \limits_{t \to \infty} ||x_i(t)-\frac{1}{N} \sum_{j=1}^N r_j(t)||=&0,
	\end{align}
where each agent has only local interaction with its neighbors.
 	

	\subsection{Distributed Average Tracking for Heteregeous Physical Agents Using Neighbors' Positions}\label{subsec:NoFilter}
	In this subsection, we study the distributed average tracking problem for heterogeneous multi-agent system consisting of three different dynamics, single-integrator, double-integrator and Euler-Lagrange dynamics. Here, we propose an algorithm to achieve goal \eqref{goal}, where each agent is required to have access to only its own position and the relative positions between itself and its neighbors. Note that in some applications, these pieces of information can be obtained by sensing; hence the communication necessity might be eliminated. For notational simplicity, we will remove the index $t$ from variables in the reminder of the paper.
	
Three controllers are proposed, where each agent according to its dynamic will employ the proper control $u_i$.  Consider the control input
\begin{align}\label{u-single-i}
		u_i= -  \beta_{i}& \text{sgn}\big[ \sum\limits_{j=1}^{N}  a_{ij}  (x_i-x_j)\big]  \notag\\&- (x_i-r_i)+v^r_i,
		  \qquad  i=1,\ldots, M-1,
		\end{align}  
		for agents with single-integrator dynamics and
		\begin{align} \label{u-double-i}
u_i=& -  \beta_{i} \text{sgn}\big[ \sum\limits_{j=1}^{N}  a_{ij}  (x_i-x_j)\big] - \sum\limits_{j=1}^{N}  a_{ij}  (x_i-x_j) \\&- (x_i-r_i)-2(v_i -v^r_i)+a^r_i, \qquad  i=M,\ldots, N'-1 \notag
	\end{align}  
	for agents with double-integrator dynamics and
	\begin{align} \label{u-euler-nofliter}
u_i=& Y_i (x_i,\dot{x}_i,\upsilon_i,\nu_i) \hat{\theta}_i-\alpha s_i\notag\\ \qquad \ \ &- \sum\limits_{j=1}^{N}  a_{ij}  (x_i-x_j) ,  \qquad  i=N',\ldots, N\ \notag\\		
		\nu_i=&-\beta_{i}  \text{sgn}\big[ \sum\limits_{j=1}^{N}  a_{ij}  (x_i-x_j)\big]-(x_i-r_i)+v^r_i, \notag\\
		s_i=& \dot{x}_i-\nu_i,\\
	 \dot{\hat{\theta}}_i =&-Y_i (x_i,\dot{x}_i,\upsilon_i,\nu_i)^T s_i,\notag
	\end{align}  		
for agents with Euler-Lagrange dynamics, where  $\alpha$ and $\beta_{i}$ are positive constant gains to be designed, and ${\hat{\theta}}_i$ is  the estimate of the unknown but constant parameters $\theta_i$.
Using the definition of the generalized gradient \cite{clarke}, the generalized time-derivative of $s_i$ and $\nu_i$ are defined, respectively, as $\vartheta_i$ and $\upsilon_i$, where $\zeta_i \in \vartheta_i$, and $\mu_i \in \upsilon_i$. Let $\xi_i$ denotes the minimum norm element of $\upsilon_i$.

\begin{theorem} \label{Theorem-NoFilter}
Under the control law given by  \eqref{u-single-i}-\eqref{u-euler-nofliter} for system defined in \eqref{single-dynamics}-\eqref{euler-dynamics}, distributed average tracking goal \eqref{goal} is achieved asymptotically, provided that Assumptions \ref{conn-graph} and \ref{bound-r-vr} hold and the control gain $\beta_{i}$ is chosen such that $\min_{i\in \mathcal{I}} \beta_i> \bar{r}+\bar{v^r}$ and $\alpha>0$.
		\end{theorem}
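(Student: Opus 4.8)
The plan is to reduce the three heterogeneous closed loops to a common ``sliding-variable'' form and then build a single nonsmooth Lyapunov function for the stacked system. First I would introduce, for every agent, the desired velocity $\nu_i = -\beta_i\,\text{sgn}[(Lx)_i] - (x_i-r_i) + v^r_i$ (this is exactly the $\nu_i$ already used for the Euler--Lagrange agents, and it coincides with $u_i$ for the single-integrator agents), together with a slack variable $s_i$, where $s_i\equiv 0$ for $i=1,\dots,M-1$, $s_i = v_i-\nu_i$ for the double-integrator agents, and $s_i=\dot x_i-\nu_i$ for the Euler--Lagrange agents. Writing $e_i \triangleq (Lx)_i = \sum_j a_{ij}(x_i-x_j)$, substituting the control laws \eqref{u-single-i}--\eqref{u-euler-nofliter} should yield the clean error dynamics $\dot x_i = \nu_i + s_i$ for all $i$; $\dot s_i = -s_i - e_i$ (up to the set-valued derivative of the $\text{sgn}$ term) for the double-integrator agents; and, using property (P3) with the regressor evaluated at $(\upsilon_i,\nu_i)$, $M_i\dot s_i = -C_i s_i - \alpha s_i - e_i + Y_i\tilde\theta_i$ with $\dot{\tilde\theta}_i = -Y_i^{T}s_i$ for the Euler--Lagrange agents, where $\tilde\theta_i=\hat\theta_i-\theta_i$.

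The second step is the Lyapunov candidate
$$V = \tfrac12\,x^{T}Lx + \tfrac12\!\!\sum_{\text{double-int}}\!\!\norm{s_i}^2 + \tfrac12\!\!\sum_{\text{Euler--Lag}}\!\!\big(s_i^{T}M_i(x_i)s_i + \norm{\tilde\theta_i}^2\big).$$
I would compute its generalized time derivative $\dot{\tilde V}$ along the Filippov solutions. The consensus term contributes $(Lx)^{T}\dot x = -x^{T}Lx - \sum_i\beta_i\,(Lx)_i^{T}\text{sgn}[(Lx)_i] + (Lx)^{T}(r+v^r) + (Lx)^{T}s$, while the sliding terms contribute $-\sum\norm{s_i}^2$, $-\alpha\sum\norm{s_i}^2$ and a cross term $-s^{T}(Lx)$. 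Three cancellations are the crux: (i) the $(Lx)^{T}s$ and $-s^{T}(Lx)$ cross terms cancel because the same $-e_i$ appears in each $s_i$-dynamics; (ii) property (P2) removes the Coriolis contribution via $\tfrac12 s_i^{T}\dot M_i s_i - s_i^{T}C_i s_i=0$; and (iii) the adaptation law cancels $\sum s_i^{T}Y_i\tilde\theta_i$ against $\sum\tilde\theta_i^{T}\dot{\tilde\theta}_i$.

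The third step invokes the gain condition. Since $(Lx)_i^{T}\text{sgn}[(Lx)_i]=\norm{(Lx)_i}_1$ and $(Lx)_i^{T}(r_i+v^r_i)\le \norm{(Lx)_i}_1(\bar r+\bar{v^r})$ by H\"older and Assumption~\ref{bound-r-vr}, the surviving signum and reference terms combine into $-\sum_i(\beta_i-\bar r-\bar{v^r})\norm{(Lx)_i}_1\le 0$ whenever $\min_i\beta_i>\bar r+\bar{v^r}$. Hence $\dot{\tilde V}\le -x^{T}Lx-\alpha\sum\norm{s_i}^2-\sum\norm{s_i}^2\le 0$, and Corollary~\ref{barbalate-nonsmooth} yields boundedness together with $Lx\to 0$ and $s_i\to 0$. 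By Lemma~\ref{eigen} and Assumption~\ref{conn-graph}, $Lx\to0$ forces position consensus, $x_i-x_j\to0$. To pin the consensus value to the average I would sum the reduced dynamics, $\tfrac{d}{dt}\sum_i(x_i-r_i) = -\sum_i(x_i-r_i)-\sum_i\beta_i\,\text{sgn}[(Lx)_i]+\sum_i s_i$, and argue that on the reached sliding manifold the equivalent value of the signum terms must keep the agents in consensus, which forces $\sum_i(x_i-r_i)\to0$; combined with consensus this gives $x_i\to\frac1N\sum_j r_j$, i.e.\ goal \eqref{goal}.

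The main obstacle is the rigorous nonsmooth bookkeeping around the signum terms. Because $\nu_i$ itself contains $\text{sgn}[(Lx)_i]$, the slack dynamics of the double-integrator agents (and the regressor argument $\upsilon_i$ of the Euler--Lagrange agents) involve the set-valued time derivative of the signum; these must be handled through Filippov solutions and the generalized gradient $\dot{\tilde V}$, with $\upsilon_i$, $\xi_i$ and the minimum-norm element used to make $\dot\nu_i$ and $\dot s_i$ well defined. One must verify that the troublesome $\beta_i s_i^{T}\tfrac{d}{dt}\text{sgn}[(Lx)_i]$ contribution vanishes almost everywhere (the signum is locally constant off the switching set, which has measure zero), so that the clean inequality above holds for the generalized derivative. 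A second delicate point is that $\tfrac12 x^{T}Lx$ is only positive semidefinite, so the final pinning of the consensus value to the average is invisible to $V$ and must be recovered from the reduced sliding dynamics as a separate cascade argument.
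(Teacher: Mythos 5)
Your proposal is correct and follows essentially the same route as the paper: your unified slack variables ($s_i=v_i-\nu_i$ for double integrators, $s_i=\dot x_i-\nu_i$ for Euler--Lagrange agents) are exactly the paper's $v_d-\Phi$ and $s$, your Lyapunov candidate coincides with the paper's $V_t$, and you identify the same three cancellations, the same gain condition, the same almost-everywhere vanishing of the set-valued $\tfrac{d}{dt}\text{sgn}$ contribution, and the same two-stage conclusion (consensus from $Lx\to 0$ plus an ISS-type argument on $\sum_i(x_i-r_i)$). The two caveats you flag at the end --- the nonsmooth bookkeeping of $\dot\nu_i$ and the semidefiniteness of $\tfrac12 x^TLx$ --- are precisely the points the paper handles via the generalized gradient with the minimum-norm element and via the substitution $x^TLx=\norm{D^Tx}^2$, respectively.
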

	\begin{proof}
Rewrite the Laplacian matrix as $L=[L^T_s\  L^T_d \ L^T_e]^T$, where $L_s \in \mathbb{R}^{(M-1)\times N}, L_d \in \mathbb{R}^{(N'-M)\times N}$ and $L_e \in \mathbb{R}^{(N-N'+1)\times N}$ and subscripts $_s$, $_d$ and $_e$, respectively, are used for single-integrator, double-integrator and Euler-Lagrange dynamics, i.e, $L_s$ describes the interaction among single-integrator agents and other agents.
Let $x$ denotes the column stack vectors of all $x_i$'s $i=1,\ldots,N$, and it can be rewritten as $x=[x^T_s \ x^T_d\ x^T_e]^T,$ where $x_s, x_d$ and $x_e$ are, respectively, the column stack vectors of the positions for single-integrator, double-integrator and Euler-Lagrange dynamics. 

System \eqref{single-dynamics} with control input \eqref{u-single-i} can be rewritten in vector form as
\begin{align}\label{closed-single}
		\dot{x}_s=& -\beta_s \text{sgn}[(L_s\otimes I_\mathfrak{p}) x]-(x_s-r_s)+v^r_s,
				\end{align}  
where $r_s=[r_1^T,...,r_{M-1}^T]^T$, and $v^r_s=[{v^r_1}^T,...,{v^r_{M-1}}^T]^T$, denote, respectively, the aggregated reference inputs and reference velocities of the single-integrator dynamic \eqref{single-dynamics} and $\beta_s=\text{diag}(\beta_1,\cdots,\beta_{M-1})$. System \eqref{double-dynamics} with control input \eqref{u-double-i} can be rewritten in vector form as
\begin{align} \label{closed-double}
		\dot{x}_d=&v_d                                               \notag              \\
		\dot{v}_d=& -  \beta_d \text{sgn}[(L_d\otimes I_\mathfrak{p})x]-(L_d\otimes I_\mathfrak{p})x-(x_d-r_d)\\
		&-2(v_d-v^r_d)+a^r_d, \notag
	\end{align}
where $v_d=[{v_M}^T,...,{v_{N'-1}}^T]^T$, $r_d=[r_M^T,...,r_{N'-1}^T]^T$, $v^r_d=[{v^r_M}^T,...,{v^r_{N'-1}}^T]^T$, and $a^r_d=[{a^r_M}^T,...,{a^r_{N'-1}}^T]^T$, denote, respectively, the aggregated velocities, reference inputs, reference velocities and reference accelerations of the double-integrator system \eqref{double-dynamics} and $\beta_d=\text{diag}(\beta_{M},\cdots,\beta_{N'-1})$.

It follows form (P3) that $M(x_e) \zeta+C(x_e, \dot{x}_e) s+Y (x_e,\dot{x}_e,\upsilon,\nu) \theta=u_e$, where $M(x_e) \triangleq \text{diag}\{M_{N'}(x_{N'}),\cdots, M_{N}(x_{N})\}$,  $C(x_e, \dot{x}_e) \triangleq \text{diag}\{C_{N'}(x_{N'}, \dot{x}_{N'}),\cdots, C_{N}(x_{N}, \dot{x}_{N})\}$, and $u_e=[u_{N'}^T,...,u_{N}^T]^T$. Now, by replacing the control input \eqref{u-euler-nofliter}, we have
	\begin{align} \label{closed-loop-Euler}
	M(x_e) \zeta&+C(x_e, \dot{x}_e) s+Y (x_e,\dot{x}_e,\upsilon,\nu) \theta\\ &=Y (x_e,\dot{x}_e,\upsilon,\nu)\hat{\theta}-\alpha s-(L_e\otimes I_\mathfrak{p})x,\notag
	\end{align}	
where $\zeta$, $s$, $\nu$,  $\theta$ and $\hat{\theta}$ are, respectively, the column stack vectors of all $\zeta_i$'s, $s_i$'s, $\nu_i$'s, $\theta_i$'s and $\hat{\theta}_i$'s, $i=N',\ldots,N$. Let $\beta_e=\text{diag}(\beta_{N'},\cdots,\beta_{N})$.
Let $r=[r^T_s \ r^T_d \ r^T_e]^T,$ and $v^r=[{v^r}^T_s {v^r}^T_d \ {v^r}^T_e]^T$ denote, respectively, the aggregated reference inputs, and reference velocities for all agents. 

Define the Lyapunov function $V_t$ as
\begin{align}
V_t=\frac{1}{2}x^T(L\otimes I_\mathfrak{p})& x +\frac{1}{2} (v_d - \Phi)^T (v_d - \Phi)\\&+\frac{1}{2} s M s+\frac{1}{2} \tilde{\theta}^T \tilde{\theta},\notag
\end{align}
where  $\Phi(x)=-\beta_d \text{sgn}[(L_d\otimes I_\mathfrak{p})x]-(x_d-r_d)+v^r_d$, and $\tilde{\theta}=\hat{\theta}-\theta$. It is easy to see that we have $V_1=\frac{1}{2} x^T L x=\frac{1}{2} e^T e,$ where $e=D^Tx$ and $D$ is defined in Section \ref{sec:notation}. Hence $V_1$ is a positive definite function corresponding to $e$. The candidate Lyapunov function $V_t$ satisfies the following inequalities:
		\begin{align}\label{lyap-inequaltiy}
W_1(y) \leq V(y,t) \leq W_2(y),
\end{align}
	where $y =\begin{bmatrix} x\\  v_d-\Phi \\ s\\ \tilde{\theta} \end{bmatrix}$ and $W_1$ and $W_2$ are positive-definite continuous functions defined as $W_1=\Lambda_1 \norm{y}^2$ and $W_2=\Lambda_2 \norm{y}^2$, where $\Lambda_1$ and $\Lambda_2$ are positive constants.

Define the generalized gradient of $V_t$ and $\Phi$ by $ \dot{\tilde{V}}_t$ and $\dot{\tilde{\Phi}}$, respectively. Every element of $\eta \in \dot{\tilde{V}}$ satisfies 
	\begin{align*}\small
		\eta \leq& -x^T (L\otimes I_\mathfrak{p}) \bar{\beta} \begin{bmatrix}
         \text{sgn}[(L_s\otimes I_\mathfrak{p})x]\\
            \text{sgn}[(L_d\otimes I_\mathfrak{p})x] \\  \text{sgn}[(L_e\otimes I_\mathfrak{p})x]\end{bmatrix}\\
                 &+ x^T (L\otimes I_\mathfrak{p}) \bigg( \begin{bmatrix}
        0\\ v_d-\Phi \\s  \end{bmatrix}+ \begin{bmatrix}
        -x_s+r_s+v^r_s\\
         -x_d+r_d+v^r_d \\ -x_e+r_e+v^r_e \end{bmatrix}\bigg)
            	\end{align*}
               		\begin{align*}\small
         &+(v_d - \Phi)^T \times \bigg(-  \beta_d \text{sgn}[(L_d\otimes I_\mathfrak{p})x] -(L_d\otimes I_\mathfrak{p})x\\&-(x_d-r_d)-2(v_d-v^r_d)+a^r_d -\varrho\bigg)+\frac{1}{2} s \dot{M} s + \tilde{\theta}^T \dot{\hat{\theta}}\\
        &+s^T \big(-C(x_e, \dot{x}_e) s-Y (x_e,\dot{x}_e,\upsilon,\nu) \tilde{\theta}-\alpha s\big)\\
        &-s^T(L_e\otimes I_\mathfrak{p})x, \normalsize
   	\end{align*}
	where $\bar{\beta}=\text{diag}(\beta_s, \beta_d,\beta_e)$, and $\varrho \in \dot{\tilde{\Phi}}$ and we used the fact that we can rewrite equations \eqref{closed-double} and \eqref{u-euler-nofliter}, respectively, as \begin{align*}
	{x}_d=\Phi+v_d-\Phi
	\end{align*}and
		\begin{align}\label{s-x}
	\dot{x}_e=-\beta_e \text{sgn}[(L_e\otimes I_\mathfrak{p})x]-(x_e-r_e)+v^r_e+s.
	\end{align}	
Employing \eqref{u-euler-nofliter}, we have 
		\begin{align*}
		\eta \leq& -(\min_{i\in \mathcal{I}}{\beta_i}) \norm{(L\otimes I_\mathfrak{p})x}_1- x^T(L\otimes I_\mathfrak{p})x\\
        &+x^T(L\otimes I_\mathfrak{p})(r+v^r)+x^T (L_d\otimes I_\mathfrak{p})^T (v_d-\Phi)\\
        & +x^T (L_e\otimes I_\mathfrak{p})^T s+(v_d - \Phi)^T [\Phi-v_d-(L_d\otimes I_\mathfrak{p})x\\
        &-(v_d-v^r_d)+a^r_d -\varrho]X+\frac{1}{2} s \dot{M} s-\alpha s^Ts\\
        &+s^T[-C(x_e, \dot{x}_e) s-Y (x_e,\dot{x}_e,\upsilon,\nu) \tilde{\theta}]-\alpha s^Ts\\
        &-s^T(L_e\otimes I_\mathfrak{p})x - \tilde{\theta}^T Y (x_e,\dot{x}_e,\upsilon,\nu)^Ts\\
        =&  -(\min_{i\in \mathcal{I}}{\beta_i}) \norm{(L\otimes I_\mathfrak{p})x}_1- x^T(L\otimes I_\mathfrak{p})x\\&+x^T(L\otimes I_\mathfrak{p})(r+v^r)+(v_d - \Phi)^T (\Phi-v_d)\\&+(v_d - \Phi)^T [(v^r_d-v_d)+a^r_d -\varrho-\chi+\chi]-\alpha s^Ts,
	\end{align*}
where $\chi$ is the minimum norm element of $\dot{\tilde{\Phi}}$ and we have used property (P2) to obtain the last equality.

Under assumption \ref{bound-r-vr} and by selecting $\min_{i\in \mathcal{I}}{\beta_i}> \bar{r}+\bar{v^r}$, we know that $-(\min_{i\in \mathcal{I}}{\beta_i}) \norm{(L\otimes I_\mathfrak{p})x}_1+x^T(L\otimes I_\mathfrak{p})(r+v^r)<0$. Now, using the fact that $\chi=-(v_d-v^r_d)+a^r_d$, it follows $\eta \leq- x^T(L\otimes I_\mathfrak{p})x+(v_d - \Phi)^T (\Phi-v_d)+(v_d - \Phi)^T(\chi-\varrho)-\alpha s^Ts$.
Using an argument similar to \cite{Kyriakopoulos2008}, $\chi-\varrho$ is zero wherever $\nu(x,v,t)$ is differentibale. Also at points of non-differentiability, we will have $\chi-\varrho=0$ \cite{Kyriakopoulos2008}.
	Hence, $\eta \leq - x^T(L\otimes I_\mathfrak{p})x-(v_d - \Phi)^T (v_d - \Phi)-\alpha s^Ts$.
Now, we can see that $\dot{\tilde{V}}_t \leq -W(y)$, where $W$ is a  positive semi-definite defined on the domain $\mathcal{D}= \mathbb{R}^{N(3\mathfrak{p}+{\mathfrak{p}_\theta})}$. As a result $V_t \in \mathcal{L}_\infty$, and  $\tilde{\theta}, s, e, (v_d - \Phi) \in \mathcal{L}_\infty$. 

By calling $z=x_e-r_e$, we can rewrite \eqref{s-x} as $\dot{z}=-z-\beta_e \text{sgn}[(L\otimes I_\mathfrak{p})x]+s$, where we know that $(L\otimes I_\mathfrak{p})x$ and $s$ are bounded. Hence it is easy to see that $z$ will remain bounded. Also $\dot{z}$ will be bounded because $z, (L\otimes I_\mathfrak{p})x$ and $s$ are bounded. Now, using \eqref{closed-loop-Euler} and under assumptions (P1) and (P3), it is easy to see that  $\zeta$ is bounded. 
	
Knowing the fact that $s$ is continuous and bounded, we can use the mean value theorem for nonsmooth functions \cite{meanvalue}, where we have \begin{align} 
	\frac{s(t_1)-s(t_0)}{t_1-t_0} \in S,\  \forall t_0, t_1
\end{align}
and $S$ denotes the set $\partial s(t) \cup- \partial (-s)(t)$ for $t \in (t_0, t_1)$. Because $\zeta$ is bounded for every $\zeta \in \vartheta$, we know that there exists a $\kappa$ such that  $\partial s(t)\leq \kappa, \forall t$.  Hence, the members of the set $S$ are all bounded and we have $s(t_1)-s(t_0) \leq \kappa (t_1-t_0),  \forall t_0, t_1$, which shows $s$ is lipschitz and therefore it is uniformly continuous.

Now, choose $\rho>0$ such that  $\mathcal{B}_\rho \subset \mathcal{D}$ denotes a closed ball. Define $\mathcal{M}\subset \mathcal{D}$ as $\mathcal{M} \triangleq \{\varpi \subset \mathcal{M} | W_2(\varpi) \leq \underset{\norm{\varpi}=\rho}\min  W_1(\varpi)=\lambda_1\rho^2\}$.  Then, all conditions in Corollary \ref{barbalate-nonsmooth}, LaSalle-Yoshizawa for nonsmooth systems, are provided and we have $W(y) \to 0$ as $t \to \infty$, $\forall y(0) \in \mathcal{M}$. Because $\rho$ can be selected arbitrarily large to include all
initial conditions, the region of attraction is $\mathcal{M}= \mathbb{R}^{N(3\mathfrak{p}+{\mathfrak{p}_\theta})}$ .

Now, having $W(y) \to 0$, it follows that $s \to 0$, $v_d - \Phi \to 0$ and $(L\otimes I_\mathfrak{p})x \to 0$.
Since $v_d - \Phi \to 0$, we will have
\begin{align}\label{epsilon-d}
\dot{x}_d=-\beta_d \text{sgn}[(L_d\otimes I_\mathfrak{p}) x]-(x_d-r_d)+v^r_d+\epsilon_d,
\end{align}
where $\epsilon_d \to 0$ as $t \to \infty$.
Also using \eqref{s-x}, and because $s \to 0$, we have 
\begin{align} \label{epsilon-e}
	\dot{x}_e=-\beta_e \text{sgn}[(L_e\otimes I_\mathfrak{p})x]-(x_e-r_e)+v^r_e+\epsilon_e,
\end{align}
where $\epsilon_e \to 0$ as $t \to \infty$.

Hence, it turns out that using  \eqref{closed-single}, \eqref{epsilon-d} and \eqref{epsilon-e}, we have
\begin{align}
\dot{x}_s=-\beta_s &\text{sgn}[(L_s\otimes I_\mathfrak{p})x]-(x_s-r_s)+v^r_s,\\
\dot{x}_d=-\beta_d &\text{sgn}[(L_d\otimes I_\mathfrak{p})x]-(x_d-r_d)+v^r_d+\epsilon_d,\\
\dot{x}_e=-\beta_e &\text{sgn}[(L_e\otimes I_\mathfrak{p})x]-(x_e-r_e)+v^r_e+\epsilon_e,
\end{align}
where we can rewrite it as
\begin{align}\label{sys-close-all}
\dot{x}= -\bar{\beta}\text{sgn}[(L\otimes I_\mathfrak{p})x]-(x-r)+v^r+\epsilon,
\end{align}
where $\epsilon=\begin{bmatrix}
           0 \\
          \epsilon_d\\
          \epsilon_e
         \end{bmatrix}$. Define the Lyapunov candidate function $V_1=x^T(L\otimes I_\mathfrak{p})x$, where its time-derivative along the system \eqref{sys-close-all} is
\begin{align}\label{Lyap-sys-close-all}
\dot{V_1}=& -\bar{\beta} x^T(L\otimes I_\mathfrak{p})\text{sgn}[(L\otimes I_\mathfrak{p})x-x^T(L\otimes I_\mathfrak{p})x \notag\\&+x^T(L\otimes I_\mathfrak{p})(r+v^r)+x^T(L\otimes I_\mathfrak{p})\epsilon \notag\\
         &\leq -\bar{\beta}\norm{(L\otimes I_\mathfrak{p})x}_1-x^T(L\otimes I_\mathfrak{p})x\notag\\&+x^T(L\otimes I_\mathfrak{p})(r+v^r)+x^T(L\otimes I_\mathfrak{p})\epsilon
\end{align}
Now, by by selecting $\min_{i\in \mathcal{I}}{\beta_i}> \bar{r}+\bar{v^r}$ and knowing that $\epsilon \to 0$ as $t \to \infty$, we can employ Lemma 2.19 in \cite{Qu}. As a result we can show that the agents' positions reach consensus, i.e, $x_i=x_j,$ as $t\to \infty$. 
Define the variable $S_1=(\textbf{1}_N^T\otimes I_\mathfrak{p})(x-r)=\sum_{i=1}^Nx_i-\sum_{i=1}^Nr_i$, then we can rewrite \eqref{sys-close-all} as
\begin{align}\label{S1}
\dot{S}_1= -(\textbf{1}_N^T\otimes I_\mathfrak{p})\bar{\beta}\text{sgn}(Lx)-S_1+\epsilon.
\end{align}
Then we can use input-to-state stability to analyze the system \eqref{S1}
by treating the term $(\textbf{1}_N^T\otimes I_\mathfrak{p})\bar{\beta}\text{sgn}(Lx)$ as the input and $S_1$ as the state.
The system \eqref{S1} with zero input
is exponentially stable and hence input-to-state stable. Since $Lx \to 0$ as $t \to \infty$ for each agent, it follows that $S_1 \to 0$ as $t \to \infty$. This implies that $\sum_{i=1}^Nx_i \to \sum_{i=1}^Nr_i$, where combining it with the consensus result, we will have
\begin{align}
x_i \to \frac{1}{N} \sum_{j=1}^N r_j,\ \forall i \in \mathcal{I}.
\end{align}         
\end{proof}	
\begin{remark}
Note that the controllers in \eqref{u-single-i}-\eqref{u-euler-nofliter} are proposed precisely for our heterogeneous framework and they are not just a simple combination of the controllers in the literature. The interaction among agents with different dynamics is one of the challenge that we have faced. The only common state among our agents is position; hence we cannot use the well-known algorithms for double-integrator or Euler-Lagrange dynamics, which they require velocity measurement or communication. It is worthwhile to mention that algorithm \eqref{u-double-i} is proposed based on the intuition behind Backstepping approach.
\end{remark}
\subsection{Estiamtor Based Distributed Average Tracking for Heteregeous Physical Agents} \label{subsec:WithFilter}
	In this subsection, we propose an estimator based algorithm to address the distributed average tracking problem \eqref{goal} for heterogeneous multi-agent systems \eqref{single-dynamics}-\eqref{euler-dynamics}.  Here, a filter is used to generate the average of the inputs in a distributed manner, where each agent tracks its own generated signal. In some frameworks, the estimator based algorithm is able to relax the restrictive assumptions mentioned in Subsection \ref{subsec:NoFilter}. As a trade-off the estimator based algorithm necessitates communication between neighbors. 

	First, a filter is introduced for each agent to estimate the average of the reference inputs and reference velocities.
	Then the control input $u_i$, $i=1,\ldots,N$, is designed for each agent such that $x_i$ tracks $p_i$, where $p_i \in \mathbb{R}^\mathfrak{p}$ is the filter's output. The filter, adapted from \cite{SalarACC16}, is proposed as following
		\begin{align}  \label{filter-double-i}
		\dot{p}_i=&q_i                                                          \notag    \\
		\dot{q}_i=& -  \beta_{i} \text{sgn}\big[ \sum\limits_{j=1}^{N}  a_{ij} \{ (p_i+q_i)-(p_j+q_j)\}\big] \\
		&- \kappa (p_i-r_i)- \kappa (q_i -v^r_i)+a^r_i, \qquad  i=1,\ldots, N \notag
	\end{align}  
where $\beta_i=\eta_i\norm{r_i}_1+\eta_i \norm{v^r_i}_1+ \norm{a^r_i}_1+\gamma$ is a state based gain and $\eta_i, \gamma$ and $\kappa$ are positive constants to be designed. The controllers are given by
\begin{align}\label{ucontrol-single-i}
		u_i= -  \eta_{i}& \text{sgn} (x_i-p_i)+q_i   \qquad  i=1,\ldots, M-1, 
		\end{align}  
		for agents with single-integrator dynamics and
		\begin{align} \label{ucontrol-double-i}
u_i=&-  \eta_{i}\text{sgn} [(x_i-p_i)+(v_i-q_i)]-\eta_i (x_i-p_i)\notag\\
&-\eta_i(v_i-q_i)+\dot{q}_i, \qquad  i=M,\ldots, N'-1,
	\end{align}  
	for agents with double-integrator dynamics and
	\begin{align} \label{ucontrol-euler-nofliter}
u_i=& Y_i(x_i,\dot{x}_i,p_i,q_i,\dot{q}_i) \hat{\theta}_i-\alpha s_i \qquad  i=N',\ldots, N\ \notag\\		
		s_i=&\mu (x_i-p_i)+(\dot{x}_i-q_i),\\
	 \dot{\hat{\theta}}_i =&-Y_i(x_i,\dot{x}_i,p_i,q_i,\dot{q}_i)^T s_i,\notag
	\end{align}  	 		
for agents with Euler-Lagrange dynamics, where $\alpha$ and $\mu$ are positive constants.
\begin{theorem} \label{thm:DAT-filter}
Under the control law given by  \eqref{filter-double-i}-\eqref{ucontrol-euler-nofliter} for system defined in \eqref{single-dynamics}-\eqref{euler-dynamics}, the distributed average tracking goal \eqref{goal} is achieved asymptotically, provided that Assumptions \ref{conn-graph} and \ref{bound-r-vr} hold and the control gains are chosen such that $\eta_i >  \kappa>1$ and $\gamma, \alpha$ and $\mu$ are positive constants. 
\end{theorem}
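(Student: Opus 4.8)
The plan is to exploit the \emph{cascade} structure that the estimator-based design induces: the filter \eqref{filter-double-i} is an autonomous subsystem, since its right-hand side depends only on $p_i,q_i,r_i,v_i^r,a_i^r$ and the neighboring filter states $p_j,q_j$, and never on the physical states $x_i,v_i$. The coupling runs one way only, from the filter into the controllers \eqref{ucontrol-single-i}--\eqref{ucontrol-euler-nofliter}, which use $p_i,q_i,\dot{q}_i$ as the signal each physical agent is asked to track. I would therefore split the proof into two pieces: (A) show that the filter itself performs distributed average tracking, i.e. $p_i \to \frac{1}{N}\sum_{j=1}^N r_j$ and $q_i \to \frac{1}{N}\sum_{j=1}^N v_j^r$ for every $i$; and (B) show that, with the filter output treated as a bounded exogenous reference, each physical agent drives its own tracking error $x_i-p_i$ to zero. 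Combining $x_i\to p_i$ with $p_i\to\frac1N\sum_j r_j$ then yields goal \eqref{goal}.

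For part (A) the filter is a network of double integrators with the state-dependent sliding gain $\beta_i=\eta_i\norm{r_i}_1+\eta_i\norm{v^r_i}_1+\norm{a^r_i}_1+\gamma$, adapted from \cite{SalarACC16}. Writing \eqref{filter-double-i} in stacked form and introducing the combined variable $p+q$, I would use a nonsmooth Lyapunov function penalizing the network disagreement $(L\otimes I_\mathfrak{p})(p+q)$ together with the local errors $p-r$ and $q-v^r$, and compute its generalized derivative exactly as in the proof of Theorem \ref{Theorem-NoFilter}. The role of $\beta_i$ is to dominate the terms $r_i,v_i^r,a_i^r$ pointwise inside the generalized gradient, forcing a sliding mode on $(L\otimes I_\mathfrak{p})(p+q)=0$; on this manifold the residual $\kappa$-weighted linear dynamics are Hurwitz, which drives the filters to consensus. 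The aggregate is handled as in the last part of the proof of Theorem \ref{Theorem-NoFilter}: defining $S=(\onebf_N^T\otimes I_\mathfrak{p})(p-r)$, its scalar dynamics are input-to-state stable with the signum term as input, so $S\to 0$ and the filter average matches the reference average. Consensus together with matched average give $p_i\to\frac1N\sum_j r_j$, and likewise $q_i\to\frac1N\sum_j v_j^r$; along the way all filter signals, including $\dot{q}_i$, are shown to be bounded.

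For part (B) I would treat $p_i,q_i,\dot{q}_i$ as known bounded signals and analyze each dynamics class separately. For a single-integrator agent, substituting \eqref{ucontrol-single-i} into \eqref{single-dynamics} and using $\dot{p}_i=q_i$ collapses the error dynamics to $\frac{d}{dt}(x_i-p_i)=-\eta_i\,\text{sgn}(x_i-p_i)$, which reaches $x_i=p_i$ in finite time. For a double-integrator agent, \eqref{ucontrol-double-i} is engineered so that the sliding variable $(x_i-p_i)+(v_i-q_i)$ is attractive; on the sliding surface the reduced first-order error is exponentially stable, giving $x_i-p_i\to 0$. For an Euler--Lagrange agent I would use the composite Lyapunov function $\frac12 s_i^T M_i(x_i)s_i+\frac12\tilde{\theta}_i^T\tilde{\theta}_i$ with $s_i=\mu(x_i-p_i)+(\dot{x}_i-q_i)$ and $\tilde{\theta}_i=\hat{\theta}_i-\theta_i$; property (P2) cancels the $\dot{M}_i$ term, the adaptation law in \eqref{ucontrol-euler-nofliter} cancels the regressor cross term, and what remains is $\le-\alpha\norm{s_i}^2$. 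Corollary \ref{barbalate-nonsmooth} then yields $s_i\to 0$, and since $s_i$ is a Hurwitz first-order operator acting on $x_i-p_i$ driven by the vanishing input $s_i$, it follows that $x_i-p_i\to 0$.

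The main obstacle is part (A): establishing \emph{exact} average tracking of the nonsmooth filter. Consensus of the filters is comparatively routine, but showing that the aggregate $\frac1N\sum_i p_i$ locks onto $\frac1N\sum_i r_i$ is delicate because the discontinuous term $\sum_i\beta_i\,\text{sgn}([(L\otimes I_\mathfrak{p})(p+q)]_i)$ does not vanish under summation; it must be controlled through the Filippov equivalent-control value on the sliding manifold, and the state-based gains $\beta_i$ together with $\eta_i>\kappa>1$ must be verified to be large enough to sustain the sliding mode for all admissible $r_i,v_i^r,a_i^r$. A secondary technical point is that every Lyapunov computation here is carried out with generalized gradients for Filippov solutions, so regularity of the candidate functions and the $\chi-\varrho$ type cancellations at points of nondifferentiability, as in Theorem \ref{Theorem-NoFilter}, must be checked rather than assumed.
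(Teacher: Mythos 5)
Your proposal follows essentially the same route as the paper's proof: the identical cascade decomposition (first show the filter \eqref{filter-double-i} achieves $p_i\to\frac{1}{N}\sum_j r_j$ and $q_i\to\frac{1}{N}\sum_j v_j^r$ via a disagreement-vector Lyapunov function in which the state-based gain $\beta_i$ with $\eta_i>\kappa>1$ dominates the reference terms, followed by the input-to-state-stability argument on the sums $S_1=\sum_i(p_i-r_i)$, $S_2=\sum_i(q_i-v_i^r)$; then show $x_i\to p_i$ separately for each dynamics class, with the composite $\frac12 s_i^TM_is_i+\frac12\tilde\theta_i^T\tilde\theta_i$ function and properties (P2)--(P3) for the Euler--Lagrange agents). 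The minor stylistic differences (sliding-mode framing for the double-integrator tracking step, and your---justified---caution about the signum term in the aggregate ISS argument, which the paper treats more casually) do not change the argument.
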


\emph{Proof}:
\textbf{Filter}:
Here, it is proved that, $\forall i=1,\cdots,N$, we have
\begin{align}\label{estiamtor-goal}
\lim\limits_{t \to \infty} p_i =& \frac{1}{N} \sum\limits_{j=1}^{N} r_j\notag\\
\lim\limits_{t \to \infty} q_i =& \frac{1}{N} \sum\limits_{j=1}^{N} v^r_j.
\end{align}

Let $p=[p_1^T,\ldots,p_N^T]^T$, and $q=[q_1^T,\ldots,q_N^T]^T$, denote the aggregated states of the filters. Let $M\triangleq I_N- \frac{1}{N}\textbf{1}^T_N \textbf{1}_N$. Note that $M$ has one simple zero eigenvalue with $\mathbf{1}_N$ as its right eigenvector and has $1$ as its other eigenvalue with the multiplicity $N-1$.
Define the consensus error vectors $\tilde{p}=(M \otimes I_\mathfrak{p}) p$ and $\tilde{q}=(M \otimes I_\mathfrak{p}) q$.
Then it is easy to see that $\tilde{p}=0$ (respectively, $\tilde{q}=0$) if and only if $p_i=p_j,\ \forall i,j \in {\mathcal I}$ ($q_i=q_j, \forall i,j \in {\mathcal I})$.

Now, the estimator dynamics \eqref{filter-double-i} can be rewritten in vector form as
\begin{align*}\small
	\dot{\tilde{p}}=& \tilde{q}, \notag \\
	\dot{\tilde{q}}=& -\alpha (M  \beta \otimes I_\mathfrak{p}) \mbox{sgn} [( L \otimes I_\mathfrak{p})(\tilde{p}+\tilde{q})]-\kappa \tilde{p} \notag \\
	&  +\kappa  (M  \otimes I_\mathfrak{p}) r - \kappa \tilde{q} +\kappa (M  \otimes I_\mathfrak{p}) v^r+(M \otimes I_\mathfrak{p}) a^r, \notag	
\end{align*}
\normalsize
where  ${r(t)=[r^T_1,\ldots,r^T_N ]^T}$,  ${v^r(t)=[{v_1^r}^T,\ldots,{v_N^r}^T ]^T}$ and ${a^r(t)=[{a^r}^T_1,\ldots,{a^r}^T_N ]^T}$, are respectively, the aggregated reference inputs, reference velocities and reference accelerations, and $\beta=\mbox{diag}(\beta_1,\ldots,\beta_N).$ 

Consider the Lyapunov function candidate
$V_1= \frac{1}{2} 
	\begin{bmatrix}
		\tilde{p}^T && \tilde{q}^T
	\end{bmatrix}
	( L \otimes
	\begin{bmatrix}
		2 \kappa  && 1 \\
		1  && 1
	\end{bmatrix}
	\otimes I_\mathfrak{p})
	\begin{bmatrix}
		\tilde{p} \\
		\tilde{q}
	\end{bmatrix}$.
Since $(\mathbf{1}_N \otimes I_\mathfrak{p}) ^T \tilde{p} =\mathbf{0}_{N\mathfrak{p}}$ and $(\mathbf{1}_N \otimes I_\mathfrak{p}) ^T \tilde{q} =\mathbf{0}_{N\mathfrak{p}}$, by using Lemma \ref{eigen}, we have
$V_1 \geq \frac{\lambda_2(L)}{2} \begin{bmatrix}
		\tilde{p}^T && \tilde{q}^T
	\end{bmatrix} (
	\begin{bmatrix}
		2 \kappa && 1 \\
		1 && 1
	\end{bmatrix} \otimes I_{N\mathfrak{p}})
	\begin{bmatrix}
		\tilde{p} \\
		\tilde{q}
	\end{bmatrix}$,
where $\lambda_2 (L)$ is defined in Lemma \ref{eigen}. Now, using the fact that $\begin{bmatrix}
		2 \kappa && 1 \\
		1 && 1
\end{bmatrix}>0$,
for $\kappa >\frac{1}{2}$, it is easy to see that $V_1$ is positive definite.
The derivative of $V_1$ is given as

\begin{align*}
	\small
	\dot{V}_1 =&  2 \kappa \tilde{p}^T (L \otimes I_\mathfrak{p}) \tilde{q}+\tilde{q}^T (L \otimes I_\mathfrak{p}) \tilde{q}-\kappa \tilde{p}^T (L \otimes I_\mathfrak{p}) \tilde{p} \notag \\
	&+\tilde{p}^T (L \otimes I_\mathfrak{p})\big(\kappa r +\kappa v^r+a^r \big)-\kappa \tilde{p}^T (L \otimes I_\mathfrak{p}) \tilde{q} \notag \\
	&- \tilde{p}^T (L  \beta \otimes I_\mathfrak{p}) \mbox{sgn} [( L \otimes I_\mathfrak{p})(\tilde{p}+\tilde{q})] \notag \\
	&+\tilde{q}^T (L \otimes I_\mathfrak{p})\big(\kappa r +\kappa v^r+a^r \big)-\kappa \tilde{q}^T (L \otimes I_\mathfrak{p}) \tilde{p} \notag \\
	& -\kappa \tilde{q}^T (L \otimes I_\mathfrak{p}) \tilde{q}- \tilde{q}^T (L  \beta \otimes I_\mathfrak{p}) \mbox{sgn} [( L \otimes I_\mathfrak{p})(\tilde{p}+\tilde{q})] \notag \\
	=&-\kappa \tilde{p}^T (L \otimes I_\mathfrak{p}) \tilde{p} - (\kappa -1 ) \tilde{q}^T (L \otimes I_\mathfrak{p}) \tilde{q} \\
	&+(\tilde{p}+\tilde{q})^T (L \otimes I_\mathfrak{p})\big(\kappa r +\kappa v^r+a^r \big) \\
	&-( \tilde{p}+\tilde{q})^T (L  \beta \otimes I_\mathfrak{p}) \mbox{sgn} [( L \otimes I_\mathfrak{p})(\tilde{p}+\tilde{q})], \notag 
\end{align*}\normalsize
where  we have used $LM=L$. Now using the triangular inequality, we have
\begin{align*}
\small
	\dot{V}_1 \leq&-\kappa \tilde{p}^T (L \otimes I_\mathfrak{p}) \tilde{p} - (\kappa -1 ) \tilde{q}^T (L \otimes I_\mathfrak{p}) \tilde{q} \\
	&+ \sum\limits_{i=1}^{N} \Big \| \sum\limits_{j=1}^{N} a_{ij} \Big \{ (\tilde{p}_i+\tilde{q}_i)-(\tilde{p}_j+\tilde{q}_j) \Big \} \Big \|_1 \times \\
	& (\kappa \|r_i\|_1+\kappa\|v^r_i\|_1+\norm{a^r_i}_1) \\
	&-  \sum\limits_{i=1}^{N} \beta_i \Big \|   \sum\limits_{j=1}^{N} a_{ij} \Big \{ (\tilde{p}_i+\tilde{q}_i)-(\tilde{p}_j+\tilde{q}_j) \Big \} \Big \|_1 
	\end{align*}
\normalsize
\begin{align*}
\small
	= & -\kappa \tilde{p}^T (L \otimes I_\mathfrak{p}) \tilde{p} - (\kappa -1 ) \tilde{q}^T (L \otimes I_\mathfrak{p}) \tilde{q} \\
	&+\sum\limits_{i=1}^{N} \Big ( (\kappa-\eta_i) \|r_i\|_1+(\kappa-\eta_i)\|v^r_i\|_1 - \gamma\Big )  \times \\
	& \Big \| \sum\limits_{j=1}^{N} a_{ij} \Big \{ (\tilde{p}_i+\tilde{q}_i)-(\tilde{p}_j+\tilde{q}_j) \Big \}  \Big \|_1 ,
\end{align*}
\normalsize
where $\tilde{p}_i$ and $\tilde{q}_i$ are, respectively, the $i$th components of $\tilde{p}$ and $\tilde{q}$ and we have used the definition of $\beta_i$ to obtain the last equality.
Since $\eta_i >  \kappa$, we will have
\begin{align*}
\dot{V}_1 \leq & -\kappa \tilde{p}^T (L \otimes I_\mathfrak{p}) \tilde{p} - (\kappa -1 ) \tilde{q}^T (L \otimes I_\mathfrak{p}) \tilde{q} \\ 
\leq & -\kappa \lambda_2(L) \tilde{p}^T  \tilde{p}-(\kappa -1)\lambda_2(L) \tilde{q}^T  \tilde{q} < 0,
\end{align*}
where we have used Lemma \ref{eigen}, and  $\kappa>1$ in second inequality.
Now, it is easy to see that $\tilde{p}$ and $\tilde{q}$ are globally exponentially stable, which means 
\begin{align}\label{cons-pi-qi}
 \lim\limits_{t \to \infty} p_i =& \frac{1}{N} \sum\limits_{j=1}^{N} p_j,\notag\\
  \lim\limits_{t \to \infty} q_i =& \frac{1}{N} \sum\limits_{j=1}^{N} q_j.
\end{align}
Now, using a procedure similar to proof of Theorem \ref{Theorem-NoFilter}, the variables ${S_1=\sum_{i=1}^N (p_i- r_i)}$ and ${S_2=\sum_{i=1}^N (q_i-v_i^r) }$ are defined. By summing both sides of \eqref{filter-double-i}, for $i=1,...,N$ we have 
\begin{align}\label{sum-lip}
\dot{S}_1=& S_2, \notag \\
\dot{S}_2=&- \kappa S_1- \kappa S_2 \notag \\
&  -\sum\limits_{i=1}^{N} \beta_i   \mbox{sgn} \Big[ \sum\limits_{j=1}^{N} a_{ij} \Big\{ (p_i+q_i)-(p_j+q_j) \Big\} \Big].
\end{align}
Then we can use input-to-state stability to analyze the system \eqref{sum-lip} by treating the term $\sum\limits_{i=1}^{N} \beta_i   \mbox{sgn} \Big[ \sum\limits_{j=1}^{N} a_{ij} \Big\{ (p_i+q_i)-(p_j+q_j) \Big\} \Big]$ as the input and $S_1$ and $S_2$ as the states.
Since $\kappa>1$, the matrix $\begin{bmatrix}
\mathbf{0}_\mathfrak{p} & I_\mathfrak{p} \\
-\kappa I_\mathfrak{p} & -\kappa I_\mathfrak{p}
\end{bmatrix}$ is Hurwitz. 
Thus, the system \eqref{sum-lip} with zero input is exponentially stable and hence input-to-state stable and we have $S_1 \to 0$ and $S_2 \to 0$. Therefore, we have that $\lim\limits_{t \to \infty}\sum_{i=1}^N p_i = \sum_{i=1}^N r_i$ and $\lim\limits_{t \to \infty} \sum_{i=1}^
N q_i = \sum_{i=1}^N v_i^r$. Now, using \eqref{cons-pi-qi}, it is easy to see that the estimation goal \eqref{estiamtor-goal} is achieved.

\textbf{Controller}:
Here, each agent tracks its own generated signal, its own estimator output, where it is shown that by using the control inputs  \eqref{ucontrol-single-i}-\eqref{ucontrol-euler-nofliter}, we have $\lim\limits_{t \to \infty} x_i = p_i$ for $i=1,\ldots, N$.

Single-integrator:
Using the control input \eqref{ucontrol-single-i} for \eqref{single-dynamics}, we obtain the closed-loop dynamics for agents with single-integrator dynamics as
\begin{align}\label{ucontrol-close-single-i}
		\dot{\tilde{x}}_i= -  \eta_{i}& \text{sgn} (\tilde{x}_i),   \qquad  i=1,\ldots, M-1,
			\end{align}  
where $\tilde{x}_i= x_i- p_i$. Consider the candidate Lyapunov function $V_s=\frac{1}{2}\tilde{x}_i^T \tilde{x}_i$. By taking the derivative of $V_s$, we have $\dot{V}_s=-  \eta_{i} \norm{\tilde{x}_i}_1$. It is now easy to conclude that $\tilde{x}_i,$ for $i=1,\ldots, M-1,$ converges to zero. 

Double-integrator: For agents with double-integrator dynamic the closed-loop system, using the control input \eqref{ucontrol-double-i} for \eqref{double-dynamics}, can be written as
		\begin{align} \label{ucontrol-closed-double-i}
\dot{\tilde{v}}_i=&-  \eta_{i}\text{sgn} (\tilde{x}_i+\tilde{v}_i)- \eta_{i}\tilde{x}_i -\eta_{i}\tilde{v}_i, \qquad  i=M,\ldots, N'-1,
	\end{align}  
where $\tilde{v}_i = v_i - q_i$. Consider the candidate Lyapunov function
$V_d= \frac{1}{2} 
	\begin{bmatrix}
		\tilde{x}_i^T && \tilde{v}_i^T
	\end{bmatrix} 
	\begin{bmatrix}
		2\eta_i I_{\mathfrak{p}} && I_{\mathfrak{p}} \\
		I_{\mathfrak{p}} && I_{\mathfrak{p}}
	\end{bmatrix}
	\begin{bmatrix}
		\tilde{x}_i \\
		\tilde{v}_i
	\end{bmatrix}$.
Since $\eta_i>\frac{1}{2}$, $V_d$ is positive definite. The derivative of $V_d$ along system \eqref{ucontrol-closed-double-i} is obtained as
\begin{align*}
	\dot{V}_d=&  2\eta_i \tilde{x}_i^T  \tilde{v}_i+\tilde{v}_i^T \tilde{v}_i -\eta_i \tilde{x}_i^T (\tilde{x}_i+\tilde{v}_i)\\
	& -\eta_i \tilde{x}_i^T\mbox{sgn}(\tilde{x}_i+\tilde{v}_i) -\eta_i \tilde{v}^T (\tilde{x}_i+\tilde{v}_i) -\eta  \tilde{v}_i^T\mbox{sgn}(\tilde{x}_i+\tilde{v}_i) \\
	=&-\eta_i \tilde{x}_i^T  \tilde{x}_i+(1-\eta_i) \tilde{v}_i^T  \tilde{v}_i -\eta_i \|\tilde{x}_i+\tilde{v}_i\|_1.
\end{align*}
Since $\eta_i >1$, it is concluded that $\begin{bmatrix}
\tilde{x}_i \\
\tilde{v}_i
\end{bmatrix}$  for $i=M,\ldots, N'-1,$  asymptotically converges to zero. 
	
Euler-Lagrange: It follows form (P3) and \eqref{ucontrol-euler-nofliter} that the closed-loop dynamics for agent $i=N',\ldots, N,$ can be written as
\begin{align} \label{u-nofitler-closed-loop-Euler}
	M_i(x_i) \dot{s}_i+C(x_i, \dot{x}_i) s_i+&Y_i(x_i,\dot{x}_i,p_i,q_i,\dot{q}_i) \theta_i\\ &=Y_i(x_i,\dot{x}_i,p_i,q_i,\dot{q}_i)\hat{\theta}_i-\alpha s_i.\notag
	\end{align}	
Consider the candidate Lyapunov function $V_e=\frac{1}{2}s_i^T M_is_i+\frac{1}{2}\tilde{\theta}_i^T \tilde{\theta}_i$, where $\tilde{\theta}_i=\hat{\theta}_i-\theta_i$. By taking the derivative of $V_e$, we have
\begin{align*}
\dot{V}_e=&\frac{1}{2}s_i^T \dot{M}_is_i+s_i^T M_i\dot{s}_i+\tilde{\theta}_i^T \dot{\hat{\theta}}_i \notag\\
=&\frac{1}{2}s_i^T \dot{M}_is_i-s_i^T C(x_i, \dot{x}_i) s_i+s_i^T Y_i(x_i,\dot{x}_i,p_i,q_i,\dot{q}_i)\tilde{\theta}_i\\
&-\alpha s_i^T s_i -\tilde{\theta}_i^TY_i(x_i,\dot{x}_i,p_i,q_i,\dot{q}_i)^T s_i\\
=&-\alpha s_i^T s_i,
\end{align*}
where (P2) is employed to obtain the last equality. Then we can get that $s_i, \tilde{\theta}_i \in \mathbb{L}_\infty$. Also under Assumption \ref{bound-r-vr}, it is easy to see that $p_i$ and $q_i$ are bounded. Therefore, using the boundedness of $s_i$, we know $x_i$ and $\dot{x}_i$ remain bounded. Furthermore, from \eqref{filter-double-i}, we know that $\dot{q}_i$ is bounded. It follows from (P3) that
\begin{align} 
	M_i(x_i) [\mu (q_i-\dot{x}_i)+\dot{q}_i]&+C_i(x_i, \dot{x}_i) [\mu (p_i-x_i)+q_i]+g_i(x_i)\notag\\&=Y_i(x_i,\dot{x}_i,p_i,q_i,\dot{q}_i) \theta_i,
	\end{align}	
where using the 	boundedness of its components, we can see that $Y_i(x_i,\dot{x}_i,p_i,q_i,\dot{q}_i)$ is bounded for $i=N',\ldots, N$. Now, it follows form \eqref{u-nofitler-closed-loop-Euler} that $\dot{s}_i$ is bounded. This guarantees the boundedness of $\ddot{V}_e$. Thus by using Lyapunov-like lemma, we have $s_i \to 0$ for $i=N',\ldots, N$.
Using an argument similar to Lemma 5 in \cite{FeiChenACC13}, it is obtained that  $\lim\limits_{t \to \infty} x_i = p_i$ for $i=N',\ldots, N$. Till now it is proved that $\lim\limits_{t \to \infty} x_i = p_i$ for $i=1,\ldots, N$. Now, it follows from \eqref{estiamtor-goal} that the goal \eqref{goal} is achieved. 
\endproof
\begin{remark} \label{remark-euler-restrict}
Note that the restriction in Theorem \ref{thm:DAT-filter}, Assumption \ref{bound-r-vr}, is placed by agents with Euler-Lagrange dynamic. As it is stated in P3, the regression matrix $Y_i$ is a function of its own states, $x_i$ and $\dot{x}_i$. According to our goal, these states have to track, respectively, the average of the reference inputs and the reference velocities; hence to guarantee a bounded  $Y_i$, it is required to have a bounded reference input and  the reference velocity (Assumption \ref{bound-r-vr}).
\end{remark}
\begin{remark}
Both algorithms introduced in \eqref{u-single-i}-\eqref{u-euler-nofliter} and  \eqref{filter-double-i}-\eqref{ucontrol-euler-nofliter} require that Assumptions \ref{bound-r-vr} hold. In algorithm \eqref{u-single-i}-\eqref{u-euler-nofliter}, the agents just need their own positions and the relative positions between themselves and their neighbors.
In some applications, these pieces of information can be obtained by sensing; hence the communication necessity might be eliminated.
However, in algorithm \eqref{filter-double-i}-\eqref{ucontrol-euler-nofliter} each agent must communicate two variables $p_i$ and $q_i$ with its neighbors, which needs communication. 
\end{remark}
\begin{remark}
The restriction noted in Remark \ref{remark-euler-restrict} is inevitable when we have an agent with Euler-Lagrange dynamics among our agents. However, for a multi-agent system consisting of agents with only single-integrator and double-integrator dynamics, Assumption \ref{bound-r-vr} will be relaxed in algorithm  \eqref{filter-double-i}-\eqref{ucontrol-euler-nofliter}. As a result, there will be no restriction on admissible reference inputs. Note that in this framework Assumptions \ref{bound-r-vr} can not be relaxed for algorithm \eqref{u-single-i}-\eqref{u-euler-nofliter}.
\end{remark}

\section{Simulation and Discussion } \label{sec:sim}
In this section, we present a simulation to illustrate the theoretical result in Subsection \ref{subsec:NoFilter}. Consider a team of six agents. The interaction among the agents is described by an undirected graph shown in Fig. \ref{graph}, where agents are colored based on their dynamics. Agents with single-integrator, double-integrator, and Euler-Lagrange dynamics are, respectively, colored red, blue and green. 

The agents' goal is to track the average of their reference inputs. The reference input for agent $i$ is defined as $r_i(t)=\begin{bmatrix}
3 i \text{sin}(\frac{\pi}{25}t)\\4 i \text{cos}(\frac{\pi}{50}t)
\end{bmatrix}$. The reference input and its velocity is bounded and Assumption \ref{bound-r-vr} is satisfied.
The dynamic for agents with single-integrator and double-integrator dynamics is defined as \eqref{single-dynamics} and \eqref{double-dynamics}. The dynamic equation for each Euler-Lagrange agent is modeled by $m_i\ddot{x}_i+c_i\dot{x}_i=u_i, i=5, 6$, where $x_i(t)$ is the coordinate of agent $i$ in $2D$ plane \cite{Cheah}. The parameters $m_i$ and $c_i$ represent, respectively, the mass
and damping constants of the agent $i$, which are assumed to be
constant but unknown. We let $m_1=1, c_1=0.5, m_2=1.5$, and $c_2=0.6$. 

In our example, we apply the algorithm \eqref{u-single-i}-\eqref{u-euler-nofliter}, where the controllers' parameters are selected as  $\beta_i=25, \forall i \in \mathcal{I}$, and $\alpha=15$. The initial positions of the agents are selected as $[8\ 0]^T,  [9 \ 3]^T, [10 \ 6]^T, [11 \ 9]^T,  [12 \ 12]^T,$ and $[13 \ 15]^T$ and their initial velocities are selected as zero. 
Fig. \ref{figsim} shows that the distributed
average tracking is achieved and agents track the average of the reference inputs.
\begin{figure}[t]
\begin{center} \hspace*{-.4cm}
\vspace{0.1cm} {\scalebox{0.2800}{\includegraphics*{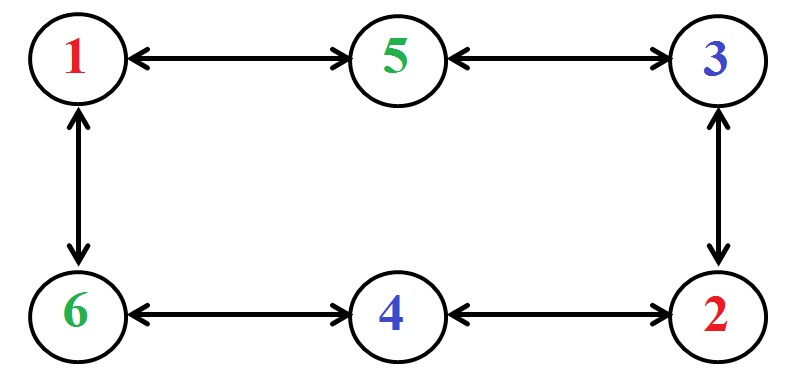}}}
\vspace{-0.3cm} \hspace{-0.4cm}  \caption{Undirected graph} \vspace{-0.1cm}
\label{graph}
\end{center}
\end{figure}
\begin{figure}[t]
\begin{center} \hspace*{-0.2cm}
{\scalebox{0.27}{\includegraphics*{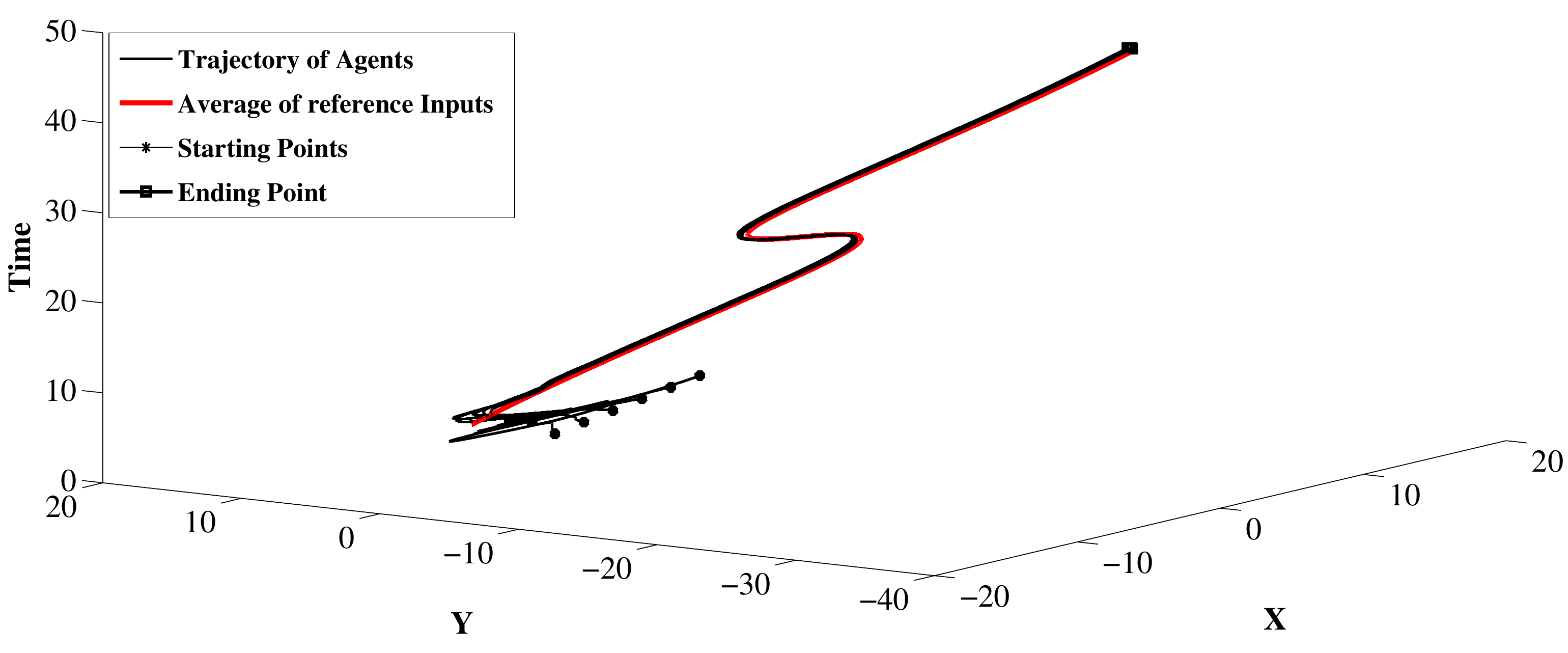}}}
\vspace{-.1cm} \caption{Trajectories of all agents along with the average of the reference inputs using the algorithm \eqref{u-single-i}-\eqref{u-euler-nofliter}.} \vspace{-0.1cm}
\label{figsim}
\end{center}
\end{figure}

\section{Conclusions} \label{sec:conclusions}

In this paper, a distributed average tracking was studied for a group of heterogeneous physical agents. The multi-agent system was consisted of the agents with single-integrator, double-integrator and Euler-Lagrange dynamics.
Two nonsmooth algorithms were proposed to achieve the distributed average tracking goal. In our first proposed algorithm, each agent required to have access to only its own position and the relative positions between itself and its neighbors, where it was possible
to rely on only local sensing. To relax some restrictive assumptions on admissible reference inputs, we proposed the second algorithm, where a filter was introduced for each agent to generate an estimation of the average reference inputs. Then, each agent tracked its own generated signal.

\bibliographystyle{IEEEtran}
\bibliography{IEEEabrv,refs}

\end{document}